\documentclass[11pt]{amsart}
\usepackage{amsmath}
\usepackage{amssymb}
\usepackage{dsfont}

\setcounter{MaxMatrixCols}{10}

\newtheorem{theorem}{Theorem}

\newtheorem{example}[theorem]{Example}

\linespread{1.5}
\email {arturbar@p.lodz.pl}
\email {malfil@math.uni.lodz.pl}
\email {szymonikemilka@wp.pl}
\subjclass{}
\keywords{}

\input{tcilatex}

\begin{document}
\author{Artur Bartoszewicz}
\address{Institute of Mathematics, Technical University of \L \'od\'z,
W\'olcza\'nska 215, 93-005 \L \'od\'z, Poland}
\author{Ma\l gorzata Filipczak}
\address{Faculty of Mathematics and Computer Sciences, \L \'od\'z
University, Stefana Banacha 22, 90-238 \L \'od\'z, Poland}
\author{Emilia Szymonik}
\address{Institute of Mathematics, Technical University of \L \'od\'z,
W\'olcza\'nska 215, 93-005 \L \'od\'z, Poland}
\title[]{Multigeometric sequences and Cantorvals.}
\date{}

\begin{abstract}
For a sequence $x \in l_1 \setminus c_{00}$, one can consider the
achievement set $E(x)$ of all subsums of series $\sum_{n=1}^{\infty} x(n)$.
It is known that $E(x)$ is one of the following types of sets:

\begin{itemize}
\item finite union of closed intervals,

\item homeomorphic to the Cantor set,

\item homeomorphic to the set $T$ of subsums of $\sum_{n=1}^{\infty} c(n)$
where $c(2n-1)=\frac{3}{4^n}$ and $c(2n)=\frac{2}{4^n}$ (Cantorval).
\end{itemize}

Based on ideas of Jones and Velleman \cite{J} and Guthrie and Nymann \cite%
{GN} we describe families of sequences which contain, according to our
knowledge, all known examples of $x$'s with $E(x)$ being Cantorvals.
\end{abstract}

\maketitle

\section{Introduction}

Suppose that $x=\left( x\left( 0\right) ,x\left( 1\right) ,x\left( 2\right)
,\dots \right) $ is an absolutely summable sequence with infinitely many
nonzero terms (i.e. $x\in l_{1}\setminus c_{00}$) and let%
\begin{equation*}
E\left( x\right) =\left\{ \underset{n=1}{\overset{\infty }{\sum }}%
\varepsilon _{n}x\left( n\right) :\varepsilon _{n}\in \left\{ 0,1\right\}
\right\}
\end{equation*}%
denote the set of all subsums of the series $\underset{n=1}{\overset{\infty }%
{\sum }}x\left( n\right) ,$ called the achievement set of $x$. It is easily
seen that for $x=\left( \frac{2}{3},\frac{2}{3^{2}},\frac{2}{3^{3}},\dots
\right) $ the set $E\left( x\right) $ is equal to the Cantor tenary set C,
and for $x=\left( \frac{1}{2},\frac{1}{2^{2}},\frac{1}{2^{3}},...\right) $
we have $E\left( x\right) =[0,1]$.

Achievement sets have been consider by many authors, some results have been
proved several times (see, for example, \cite{K} and \cite{H}) and even
conjectures formulated, despite the fact that suitable counterexamples had
been earlier published (compare \cite{GN}, \cite{KL} and \cite{P}).
Recently, an interesting survey of properties of achievement sets for
various (even divergent) sequences was presented by Rafe Jones in \cite{J}.
This paper is motivated, in particular, by the example from \cite{J} (due to
Velleman and Jones), which will be described more precisely in Theorem \ref%
{th2} and Example \ref{ex-JV}.

The following properties of sets $E(x)$ were described in 1914 by S. Kakeya
in \cite{K}:

\begin{itemize}
\item[I.] $E(x)$ is a compact perfect set.

\item[II.] If $|x(n)| > \sum_{i>n} |x(i)|$ for $n$ sufficiently large, then $%
E(x)$ is homeomorphic to the Cantor set $C$.

\item[III.] If $|x(n)| \leqslant \sum_{i>n} |x(i)|$ for $n$ sufficiently
large, then $E(x)$ is a finite union of closed intervals. Moreover, if $%
|x(n)| \geqslant |x(n+1)|$ for almost all $n$ and $E(x)$ is a finite union
of closed intervals, then $|x(n)| \leqslant \sum_{i>n} |x(i)|$ for $n$
sufficiently large.
\end{itemize}

In the same paper Kakeya formulated the hypothesis that, for any $x\in
l_{1}\setminus c_{00}$, the set $E(x)$ is homeomorphic to $C$ or is a finite
union of closed intervals. In 1980 it was shown that the Kakeya conjecture
is false \cite{WS}. We recall a number of examples in the literature which
demonstrate the falseness of the conjecture. We use the original notations
proposed by the authors. The notation will be unified later in the paper.

A. D. Weinstein and B. E. Shapiro in \cite{WS} gave an example of a sequence 
$a$ with $a(n)\geqslant a(n+1)>0$ for all $n$, and $a(n)>\sum_{i>n}a(i)$ for
infinitely many $n$ (hence $E(a)$ is not a finite union of intervals), but
having the property that the set $E(a)$ contains an interval. The sequence $%
a $ is defined by the formulas: $a(5n+1)=0,24\cdot 10^{-n}$, $%
a(5n+2)=0,21\cdot 10^{-n}$, $a(5n+3)=0,18\cdot 10^{-n}$, $a(5n+4)=0,15\cdot
10^{-n}$, $a(5n+5)=0,12\cdot 10^{-n}.$ So, 
\begin{equation*}
a=(\frac{3\cdot 8}{10},\frac{3\cdot 7}{10},\frac{3\cdot 6}{10},\frac{3\cdot 5%
}{10},\frac{3\cdot 4}{10},\frac{3\cdot 8}{100},\dots ).
\end{equation*}%
However, they did not justify why the interior of $E(a)$ is non-empty.

Independently, C. Ferens (\cite{F}) constructed a sequence $b$ such that $%
E\left( b\right) $ is not a finite union of intervals but contains an
interval, putting $b(5l-m)=(m+3)\frac{2^{l-1}}{3^{3l}}$ for $m=0,1,2,3,4$
and $l=1,2,\dots $. Therefore 
\begin{equation*}
b=(7\cdot \frac{1}{27},6\cdot \frac{1}{27},5\cdot \frac{1}{27},4\cdot \frac{1%
}{27},3\cdot \frac{1}{27},7\cdot \frac{2}{27^{2}},\dots ).
\end{equation*}

J. A. Guthrie and J. E. Nymann gave a simpler example of a sequence which
achievement set is not a finite union of closed intervals and is not
homeomorphic to the Cantor set, defining a sequence by formulas:

$c(2n-1)= \frac{3}{4^n}$ and $c(2n)= \frac{2}{4^n}$ for $n=1,2, \dots.$

In a series of papers \cite{GN}, \cite{NS1} and \cite{NS2} J. E. Nymann with
J. A. Guthrie and R. A. S\'{a}enz characterized the topological structure of
the set of subsums of infinite series in the following manner:

\begin{theorem}
\label{th1} For any $x \in l_1 \setminus c_{00}$, the set $E(x)$ is one of
the following types:

\begin{itemize}
\item[(i)] a finite union of closed intervals;

\item[(ii)] homeomorphic to the Cantor set;

\item[(iii)] homeomorphic to the set $E(c)$ (of subsums of the sequence

$(\frac{3}{4}, \frac{2}{4}, \frac{3}{16}, \frac{2}{16}, \frac{3}{64}, \dots) 
$).
\end{itemize}
\end{theorem}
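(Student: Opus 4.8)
The plan is to normalise $x$, settle the first two alternatives with Kakeya's criteria, and then put all the effort into the third one.

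\textbf{Normalisation and structure.} Since $\sum x(n)$ converges absolutely, $E(x)$ is invariant under permutations of the terms, and replacing a negative term by its modulus only translates $E(x)$; a positive rescaling leaves the homeomorphism type unchanged. So assume $x(n)>0$, $x(n)\ge x(n+1)$, and $\sum_n x(n)=1$. Put $r_k=\sum_{i>k}x(i)$ and $E_k=\{\sum_{i\le k}\varepsilon_i x(i):\varepsilon\in\{0,1\}^k\}$, so $E_k\subseteq E_{k+1}\subseteq E(x)$ and $\bigcup_k E_k$ is dense in $E(x)$. The map $\varepsilon\mapsto\sum_n\varepsilon_n x(n)$ is a continuous surjection from the Cantor space onto $E(x)$, so $E(x)$ is compact and (by Kakeya I) perfect. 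Everything rests on two identities: $E(x)=\bigcap_k\big(E_k+[0,r_k]\big)$, a decreasing intersection of finite unions of closed intervals; and $E(x)=\bigcup_{s\in E_k}\big(s+E(\sigma^{k}x)\big)$, $\sigma$ the shift --- so at every scale $E(x)$ is a finite union of affine copies of an achievement set of a tail. If $E(x)$ is a finite union of nondegenerate intervals we are in case (i); if $E(x)$ is totally disconnected then, being compact, perfect and metrizable, it is homeomorphic to the Cantor set, case (ii). So assume henceforth that $E(x)$ has a nondegenerate component but is not a finite union of intervals; by Kakeya III the jump set $J=\{k:x(k)>r_k\}$ is then infinite (otherwise $E(x)$ would be a finite union of intervals), and hence so is $J(\sigma^{k}x)$ for each $k$. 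It remains to prove $E(x)\cong E(c)$.

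\textbf{Density of the nondegenerate components.} First I would show $E(\sigma^{k}x)$ has a nondegenerate component for every $k$, by induction on $k$: from $E(\sigma^{k}x)=E(\sigma^{k+1}x)\cup\big(x(k+1)+E(\sigma^{k+1}x)\big)$, a nondegenerate component $[a,b]$ of $E(\sigma^{k}x)$ is covered by two closed sets, so by Baire one of $[a,b]\cap E(\sigma^{k+1}x)$, $[a,b]\cap(x(k+1)+E(\sigma^{k+1}x))$ contains a nondegenerate subinterval, whence $E(\sigma^{k+1}x)$ contains a subinterval and therefore a nondegenerate component. Now given $p\in E(x)$ and $\varepsilon>0$, pick $s\in E_{k_0}$ with $|s-p|<\varepsilon/2$ and then $k\ge k_0$ with $r_k<\varepsilon/2$; then $s\in E_k$ and $s+E(\sigma^{k}x)\subseteq[s,s+r_k]\subseteq(p-\varepsilon,p+\varepsilon)$, while $s+E(\sigma^{k}x)$ contains a nondegenerate interval lying in $E(x)$. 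Hence the union of the nondegenerate components of $E(x)$ is dense in $E(x)$, equivalently $E(x)=\overline{\operatorname{int}E(x)}$. (This is what rules out pathologies such as ``an interval $\cup$ a Cantor set''.)

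\textbf{Homogeneity and identification with $E(c)$.} The infinitude of $J$ also forces the extreme points of $E(x)$ to be degenerate components: if $[0,\delta]\subseteq E(x)$ then for $k\in J$ the set $E_k+[0,r_k]$ has a gap $(r_k,x(k))$ just above $0$, so $(r_k,x(k))\cap E(x)=\emptyset$ and $\delta\le r_k$, whence $\delta=0$; symmetrically at $\max E(x)$. Applying the same two mechanisms inside every affine copy $s+E(\sigma^{k}x)$ --- each of which, because $J(\sigma^k x)$ is infinite and $E(\sigma^k x)$ is not a finite union of intervals, carries nondegenerate subintervals and complementary gaps arbitrarily close to any of its points --- one obtains the homogeneity that characterises $E(c)$ up to homeomorphism (see \cite{NS1}, \cite{NS2}): $E(x)$ is a compact set, not a finite union of intervals, equal to the closure of its interior, and such that on each side every point of the ``dust'' $E(x)\setminus\operatorname{int}E(x)$ is a limit of nondegenerate components of $E(x)$ and a limit of complementary intervals of $E(x)$. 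The set $E(c)$ enjoys exactly these properties, and a Cantor-type back-and-forth, matching level by level and respecting the order of $\R$ the linearly ordered family consisting of the nondegenerate components together with the gaps of $E(x)$ with that of $E(c)$, produces an order isomorphism of the two ``intervals-and-gaps'' structures and hence a homeomorphism $E(x)\cong E(c)$.

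\textbf{The main obstacle.} Cases (i) and (ii), and the density of components above, are quick. The delicate step is making the homogeneity precise for an \emph{arbitrary} such $x$: one must check that the subintervals and the gaps produced inside the copies $s+E(\sigma^{k}x)$ genuinely survive in $E(x)$ --- are neither overrun nor filled in by neighbouring translates --- and that they accumulate, from both sides, at every point of $E(x)\setminus\operatorname{int}E(x)$. This bookkeeping is the technical core of \cite{GN}, \cite{NS1} and \cite{NS2}; once it is available, the passage to $E(c)$ is the soft, though not wholly trivial, back-and-forth indicated above.
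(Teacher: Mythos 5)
The paper does not actually prove this theorem: it is quoted from \cite{GN}, \cite{NS1}, \cite{NS2}, so there is no in-paper argument to compare yours against. Judged on its own terms, your handling of cases (i) and (ii) and your density-of-interior lemma are correct; in particular the Baire argument showing that every tail set $E(\sigma^k x)$ inherits a nondegenerate component, hence that $\operatorname{int}E(x)$ is dense in $E(x)$, is exactly the point on which \cite{NS2} repaired the original argument of \cite{GN}. The normalisation and the identity $E(x)=\bigcup_{s\in E_k}(s+E(\sigma^k x))$ are the right tools.

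The gap is in the final identification with $E(c)$. The invariant you propose --- compact, equal to the closure of its interior, not a finite union of intervals, with every point of $E(x)\setminus\operatorname{int}E(x)$ a two-sided limit of nondegenerate components and of gaps --- is not the right one, and as literally stated it is not even satisfied by $E(c)$: a left endpoint $p$ of a nondegenerate component $[p,b]$ has no gaps accumulating at it from the right. The correct characterisation, quoted in this paper from \cite{MO}, is that both endpoints of every nondegenerate component must be accumulation points of \emph{one-point components}. Your argument exhibits exactly two one-point components ($0$ and $\sum_n x(n)$) and never shows that one-point components accumulate anywhere; density of the interior alone does not exclude, for instance, a set homeomorphic to a convergent sequence of disjoint closed intervals together with its limit point, which is compact, perfect, the closure of its interior, not a finite union of intervals, and not homeomorphic to $E(c)$. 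Showing that the extreme points of the translates $s+E(\sigma^k x)$ --- which are one-point components of each translate because $J(\sigma^k x)$ is infinite --- survive as one-point components of $E(x)$ and accumulate at the endpoints of every nondegenerate component is precisely the ``bookkeeping'' you set aside, and it is the substance of the theorem rather than a technicality. The back-and-forth proving that any two $\mathcal{M}$-Cantorvals are homeomorphic is likewise only gestured at, though that step can fairly be delegated to \cite{MO}.
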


Note, that the set $E(c)$ is homeomorphic to $C\cup \bigcup_{n=1}^{\infty
}S_{2n-1}$, where $S_{n}$ denotes the union of the $2^{n-1}$ open middle
thirds which are removed from $[0,1]$ at the $n$-th step in the construction
of the Cantor ternary set $C$. Such sets are called Cantorvals (to emphasize
their similarity to unions of intervals and to the Cantor set
simultaneously). Formally, a Cantorval (more precisely, an $\mathcal{M}$%
-Cantorval - compare \cite{MO}) is a non-empty compact subset $S$ of the
real line such that $S$ is the closure of its interior, and both endpoints
of any component with non-empty interior are accumulation points of
one-point components of $S$.

Theorem \ref{th1} states that the space $l_{1}$ can be decomposed into four
sets $c_{00}$, $\mathcal{C}$, $\mathcal{I}$ and $\mathcal{MC}$, where $%
\mathcal{I}$ consists of sequences $x$ with $E(x)$ equal to a finite union
of intervals, $\mathcal{C}$ consists of sequences $x$ with $E(x)$
homeomorphic to the Cantor set, and $\mathcal{MC}$ consists of sequences $x$
with $E(x)$ being Cantorvals. Some algebraic properties and topological
(Borel) classification of these subsets of $l_{1}$ have been recently
discussed in \cite{BBGS}.

Finally, in Jones' paper \cite{J} there is presented a sequence 
\begin{equation*}
d=(\frac{3}{5},\frac{2}{5},\frac{2}{5},\frac{2}{5},\frac{3}{5}\cdot \frac{19%
}{109},\frac{2}{5}\cdot \frac{19}{109},\frac{2}{5}\cdot \frac{19}{109},\frac{%
2}{5}\cdot \frac{19}{109},\frac{3}{5}\cdot (\frac{19}{109})^{2},\dots ).
\end{equation*}%
In \cite{J}, R. Jones shows a continuum of sequences generating Cantorvals,
indexed by a parameter $q$, by proving that, for any positive number $q$
with 
\begin{equation*}
\frac{1}{5}\leqslant \sum_{n=1}^{\infty }q^{n}<\frac{2}{9}
\end{equation*}%
(i.e. $\frac{1}{6}\leqslant q<\frac{2}{11}$) the sequence 
\begin{equation*}
(\frac{3}{5},\frac{2}{5},\frac{2}{5},\frac{2}{5},\frac{3}{5}q,\frac{2}{5}q,%
\frac{2}{5}q,\frac{2}{5}q,\frac{3}{5}q^{2},\dots )
\end{equation*}%
is not in $\mathcal{C}$ nor $\mathcal{I}$, so it belongs to $\mathcal{MC}$.
Based on Jones' idea, we will describe one-parameter families of sequences
which contain (in particular) $a,b,d$ and many others.

\section{The main result.}

For any $q\in (0,\frac{1}{2})$ we will use the symbol $(k_{1},k_{2},\dots
,k_{m};q)$ to denote the sequence $(k_{1},k_{2},\dots
,k_{m},k_{1}q,k_{2}q,\dots ,k_{m}q,k_{1}q^{2},k_{2}q^{2},\dots
,k_{m}q^{2},\dots )$. Such sequences we will call multigeometric.

\begin{theorem}
\label{th2} Let $k_{1}\geqslant k_{2}\geqslant \dots \geqslant k_{m}$ be
positive integers and $K=\sum_{i=1}^{m}k_{i}$. Assume that there exist
positive integers $n_{0}$ and $n$ such that each of numbers $%
n_{0},n_{0}+1,\dots ,n_{0}+n$ can be obtained by summing up the numbers $%
k_{1},k_{2},\dots ,k_{m}$ (i.e. $n_{0}+j=\sum_{i=1}^{m}\varepsilon _{i}k_{i}$
with $\varepsilon _{i}\in \{0,1\},j=1,\dots ,n$). If\textbf{\ }$q\geqslant 
\frac{1}{n+1}$\textbf{\ }then\textbf{\ }$E(k_{1},\dots ,k_{m};q)$\textbf{\ }%
has a nonempty interior\textbf{. }If\textbf{\ }$q<\frac{k_{m}}{K+k_{m}}$%
\textbf{\ }then\textbf{\ }$E(k_{1},\dots ,k_{m};q)$\textbf{\ }is not a
finite union of intervals. Consequently, if 
\begin{equation*}
\frac{1}{n+1}\leqslant q<\frac{k_{m}}{K+k_{m}}
\end{equation*}%
then $E(k_{1},\dots ,k_{m};q)$ is a Cantorval.
\end{theorem}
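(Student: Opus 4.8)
The plan is to prove the two independent assertions — that $E(k_1,\dots,k_m;q)$ has nonempty interior when $q\geqslant\frac1{n+1}$, and that it is not a finite union of intervals when $q<\frac{k_m}{K+k_m}$ — and then simply invoke Theorem~\ref{th1} for the final conclusion: a set that has nonempty interior cannot be homeomorphic to the Cantor set, and by hypothesis it is not a finite union of intervals, so case~(iii) of Theorem~\ref{th1} applies and $E(k_1,\dots,k_m;q)$ is a Cantorval. The bulk of the work is therefore the two structural claims.

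For the \textbf{nonempty interior} part, write $x=(k_1,\dots,k_m;q)$ and split the index set into blocks $B_j=\{k_1q^j,\dots,k_mq^j\}$ for $j=0,1,2,\dots$. The subsum obtainable from block $B_j$ alone ranges over the set $q^j\cdot A$, where $A=\{\sum_{i=1}^m\varepsilon_i k_i:\varepsilon_i\in\{0,1\}\}$, and the hypothesis says $A\supseteq\{n_0,n_0+1,\dots,n_0+n\}$, an arithmetic progression of $n+1$ consecutive integers with common difference $1$. The key observation is that if $q\geqslant\frac1{n+1}$ then the "window" of length $n\cdot q^j$ that we can freely slide using block $B_j$ is at least as long as the gap $q^{j+1}\cdot(\text{something})$ between consecutive reachable points at the next scale; more precisely, consecutive values $n_0 q^j$ and $(n_0+n)q^j$ together with the whole tail starting at block $B_{j+1}$ overlap, because the maximal tail sum from blocks $B_{j+1},B_{j+2},\dots$ is $K\sum_{l\geqslant j+1}q^l = \frac{Kq^{j+1}}{1-q}$ while the spacing we need to bridge is $q^{j+1}$ (one unit at the next scale), and $q\geqslant\frac1{n+1}$ guarantees $n q^j\geqslant q^{j}\cdot\frac{1}{q}\cdot q = \dots$ — the clean way to phrase it is: the interval $[n_0 q^j,(n_0+n)q^j]+[0,\text{tail}_{j+1}]$ contains $[n_0 q^j, \text{tail}_j]$ once $n q^j + \text{tail}_{j+1}\geqslant \text{tail}_j$, i.e. $nq^j\geqslant Kq^{j}$ is too strong, so instead one shows by induction that $E(x)\supseteq[n_0 q^j/(1-q)\cdot 0\,,\,\dots]$ — concretely, that starting from scale $j$ the achievable set is all of $[\,n_0\sum_{l\geqslant j}q^l/1\,,\,K\sum_{l\geqslant j}q^l\,]$ intersected appropriately. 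The honest statement: prove by downward-style induction that for every $j$, $q^j[n_0,n_0+n] + (\text{full range of tail from }B_{j+1}) \supseteq q^j[n_0, n_0+n] + q^{j+1}[n_0,n_0+n] + \dots$ telescopes to a genuine interval, using that consecutive translates by $q^j$ overlap since $n q^j \geqslant q^j \geqslant q^{j}$ — I will need the precise inequality $\frac{1}{n+1}\le q$ exactly to make $\{0,1,\dots,n\}q^j$ together with $q^{j+1}\cdot[0, n+1]$-worth of next-block freedom cover the gap, and this is the step to get exactly right.

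For the \textbf{not a finite union of intervals} part, the idea is to exhibit, for each $j$, a "forbidden gap" in $E(x)$ just to the right of a point reachable using only blocks $B_0,\dots,B_{j-1}$ together with no contribution from block $B_j$. Using $k_1\geqslant\dots\geqslant k_m$, the smallest nonzero amount one can add at block $B_j$ is $k_m q^j$, while the entire tail from blocks $B_{j+1},B_{j+2},\dots$ sums to at most $K\frac{q^{j+1}}{1-q}$. The condition $q<\frac{k_m}{K+k_m}$ is precisely equivalent to $K\frac{q}{1-q}<k_m$, i.e. $K\frac{q^{j+1}}{1-q}<k_m q^j$, which says the total tail beyond block $B_j$ is strictly smaller than the smallest single step available at block $B_j$. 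Hence, taking a point $s$ that is a subsum using blocks before $B_j$, omitting block $B_j$ entirely, and adding the maximal tail from $B_{j+1},\dots$, the open interval between $s+(\text{that maximal tail})$ and $s+k_m q^j$ contains no point of $E(x)$: any subsum either uses nothing from $B_j$ and lands $\leqslant s+\text{maxtail}<s+k_m q^j$, or uses at least $k_m q^j$ from $B_j$ and lands $\geqslant s+k_m q^j$. Since such gaps occur at arbitrarily fine scales (every $j$), $E(x)$ cannot be a finite union of closed intervals. The main obstacle here is purely bookkeeping: one must choose $s$ so that the gap lies in the interior of the "hull" $[0,\sum x(n)]$ and is not swallowed by overlaps coming from higher blocks — but since the gap has length bounded below by $k_m q^j - K\frac{q^{j+1}}{1-q} = q^j(k_m - \frac{Kq}{1-q})$, a fixed positive multiple of $q^j$, and the only subsums that could fall inside it would have to use part of $B_j$ contributing strictly between $0$ and $k_m q^j$, which is impossible, the argument closes cleanly once the arithmetic identity $q<\frac{k_m}{K+k_m}\iff \frac{Kq}{1-q}<k_m$ is recorded.
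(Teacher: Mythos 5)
Your overall strategy coincides with the paper's: prove the two structural claims separately and then invoke Theorem \ref{th1}. Your argument that $E(x)$ is not a finite union of intervals when $q<\frac{k_m}{K+k_m}$ is essentially sound: the equivalence $q<\frac{k_m}{K+k_m}\iff\frac{Kq}{1-q}<k_m$ produces, at every scale $j$, the nonempty gap $\bigl(\frac{Kq^{j+1}}{1-q},\,k_mq^j\bigr)$ disjoint from $E(x)$, since any subsum using something from blocks $B_0,\dots,B_j$ is at least $k_mq^j$ while any subsum using nothing from them is at most $\frac{Kq^{j+1}}{1-q}$; infinitely many complementary gaps accumulating at $0$ rule out a finite union of closed intervals. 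Note, however, that this exclusion argument is airtight only for the prefix $s=0$ (for a general $s$, subsums with a \emph{different} prefix could land in your interval), so you should fix $s=0$ rather than leave $s$ unspecified. The paper gets the same conclusion by checking that the sequence is non-increasing, that $x(nm)>\sum_{i>nm}x(i)$, and quoting the converse half of Kakeya's property III --- the same mathematics packaged differently.

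The genuine gap is in the nonempty-interior half, which is precisely where the hypothesis $q\geqslant\frac{1}{n+1}$ must enter, and which you explicitly leave open (``this is the step to get exactly right''). None of the inequalities you write there is the correct one: $nq^j\geqslant Kq^j$ is, as you note, too strong and in fact irrelevant, and $nq^j\geqslant q^j\geqslant q^j$ is vacuous. The statement you need is that the set $T=\{\sum_{j\geqslant0}p_jq^j:\ p_j\in\{0,\dots,n\}\}$ is the full interval $[0,\frac{n}{1-q}]$: since $T=\{0,1,\dots,n\}+qT$, the $n+1$ translates $p+qT$ cover an interval as soon as $qT$ has length $\frac{nq}{1-q}\geqslant1$, i.e. $q\geqslant\frac{1}{n+1}$ --- this is the entire role of the hypothesis. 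Once $T$ is an interval, the assumption that each of $n_0,\dots,n_0+n$ is a $0$--$1$ combination of $k_1,\dots,k_m$ gives $\frac{n_0}{1-q}+T\subset E(x)$, exactly as you intend. The paper carries out the first step by applying the interval half of Kakeya's property III to the auxiliary sequence $y=(1,\dots,1,q,\dots,q,\dots)$ with $n$ repetitions of each power of $q$, verifying $y(nk)=q^{k-1}\leqslant\frac{nq^k}{1-q}=\sum_{j>nk}y(j)$, which holds exactly when $q\geqslant\frac{1}{n+1}$. Supplying this computation (or the equivalent induction on overlapping translates) is what is missing from your proposal.
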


\begin{proof}
Denote $x_{q}=(k_{1},\dots ,k_{m};q)$. We start with showing that, for $q<%
\frac{k_{m}}{K+k_{m}}$, $E(x_{q})$ is not a finite union of closed
intervals. Observe first, that the sequence $x_{q}$ is non-increasing.
Indeed, from the inequality $q<\frac{k_{m}}{K+k_{m}}$, it follows that $%
qK+qk_{m}<k_{m}$, and 
\begin{equation*}
k_{m}>\frac{qK}{1-q}>qK>qk_{1}.
\end{equation*}%
Moreover, using the same inequality, we obtain 
\begin{equation*}
\sum_{i>m}x_{q}(i)=K\sum_{j=1}^{\infty }q^{j}=K\frac{q}{1-q}<k_{m}.
\end{equation*}%
Hence, for any $n\in \mathbb{N}$, we have $x_{q}(nm)>\sum_{i>nm}x_{q}(i)$
and, according to the second sentence of the Kakeya property III, $E(x_{q})$
is not a finite union of closed intervals.

Suppose now that $q\geqslant \frac{1}{n+1}$\ and consider the sequence 
\begin{equation*}
y=(1,\dots ,1,q,\dots ,q,q^{2},\dots ,q^{2},\dots )
\end{equation*}%
with $n$ repetitions of each term. Note that, for any $k\in \mathbb{N}$, the
sum 
\begin{equation*}
\sum_{j>nk}y(j)=q^{k-1}\frac{nq}{1-q}
\end{equation*}%
is, by inequality $q\geqslant \frac{1}{n+1}$, bigger than or equal to $%
y(nk)=q^{k-1}.$ Therefore, for any $i\in \mathbb{N}$ 
\begin{equation*}
y(i)\leqslant \sum_{j>i}y(j)
\end{equation*}%
and again from the property III, we obtain that $E(y)$ has non-empty
interior. To end the proof, we show that 
\begin{equation*}
n_{0}\sum_{j=0}^{\infty }q^{j}+E(y)\subset E(x_{q}).
\end{equation*}%
If $t\in n_{0}\sum_{j=0}^{\infty }q^{j}+E(y)$, then there exist $p_{i}\in
\{0,1,\dots ,n\}$, $i=0,1,2,\dots $ such that 
\begin{equation*}
t=(n_{0}+n_{0}q+\dots )+(p_{0}+p_{1}q+\dots ).
\end{equation*}%
Therefore 
\begin{equation*}
t=(n_{0}+p_{0})+(n_{0}+p_{1})q+\dots
\end{equation*}%
belongs to $E(x_{q})$.
\end{proof}

\section{Examples.}

Using the latter theorem, we can easily check that sequences $a,b$ and $d$
generate Cantorvals, because they belong to appropriate one-parameter
families, indexed by $q$.

\begin{example}
\label{ex-WS}The Weinstein-Shapiro sequence (\cite{WS}).

It is clear that if $E(x)$ is a Cantorval, $\alpha \neq 0$ and $\alpha x =
(\alpha x(1), \alpha x(2), \dots)$, then $E(\alpha x)$ is a Cantorval too.
To simplify a notation we multiply the sequence $a$ by $\frac{10}{3}$ and
consider the family of sequences 
\begin{equation*}
a_q=(8,7,6,5,4;q)
\end{equation*}
for $q \in (0, \frac{1}{2})$. Summing up $8,7,6,5$ and $4$, we can get any
natural number between $n_0=4$ and $n+n_0=26$. Therefore, by Theorem \ref%
{th2}, for any $q$ satisfying inequalities 
\begin{equation*}
\frac{1}{23} \leqslant q < \frac{4}{34},
\end{equation*}
the sequence $a_q$ generates a Cantorval. Obviously, the number $\frac{1}{10}
$ used in \cite{WS} belongs to $[\frac{1}{23}, \frac{4}{34}).$ It is not
difficult to check (using III) that $a_q \in \mathcal{I}$ for $q \geqslant 
\frac{4}{34}.$
\end{example}

\begin{example}
\label{ex-F}The Ferens sequence (\cite{F}).

For the family of sequences 
\begin{equation*}
b_{q}=(7,6,5,4,3;q)
\end{equation*}%
$K$ is equal to $25$, $n_{0}=3$ and $n=19$. Hence, for any $q\in \lbrack 
\frac{1}{20},\frac{3}{28})$, $b_{q}$ generates a Cantorval. In particular,
the sequence $(7,6,5,4,3;\frac{2}{27})$, obtained from the Ferens sequence
by multiplication by a constant, generates a Cantorval. Note that $b_{q}\in 
\mathcal{I}$, for $q\geqslant \frac{3}{28}$.
\end{example}

\begin{example}
\label{ex-JV} The Jones-Velleman sequence (\cite{J}).

Applying Theorem \ref{th2} to the sequence 
\begin{equation*}
d_{q}=(3,2,2,2;q)
\end{equation*}%
we obtain $K=9$, $n_{0}=2$ and $n=5$, so for any $q\in \lbrack \frac{1}{6},%
\frac{2}{11})$, $E(d_{q})$ is a Cantorval. Moreover $d_{q}\in \mathcal{I}$
for $q\geqslant \frac{2}{11}.$

We can also consider analogous sequences for more than three $2$'s. In fact,
any sequence 
\begin{equation*}
x_{q}=(3,\underbrace{2,\dots ,2}_{k-times};q)
\end{equation*}%
with $q\in \lbrack \frac{1}{2k},\frac{2}{2k+5})$, generates a Cantorval.

Note that for $k=1$ and $k=2$ the argument of Theorem \ref{th2} breaks down,
because $\frac{1}{2k}>\frac{2}{2k+5}$. It means, in particular, that Theorem %
\ref{th2} does not apply to the Guthrie and Nymann example $c=\left( 3,2;%
\frac{1}{4}\right) $.
\end{example}

However, we can apply Theorem \ref{th2} to "shortly defined" sequences.
Indeed, for the sequence $(4,3,2;q)$, numbers $K$, $n_0$ and $n$ are the
same as for $d_q$.

It is not difficult to check that, to keep the interval $[\frac{1}{n+1},%
\frac{k_m}{K+k_m})$ non-empty, $m$ should be greater than $2$.

There is a natural question if Theorem \ref{th2} precisely describes the set
of $q$ with $(k_1, \dots, k_m;q) \in \mathcal{MC}$. The upper bounds, for
all mentioned examples are exact, because $(k_1, \dots, k_m;q) \in \mathcal{I%
}$, for $q> \frac{k_m}{K+k_m}$. However, this is not true for all sequences
satisfying the assumptions of Theorem \ref{th2}.

\begin{example}
\label{ex-h}For the sequence $h_{q}=(10,9,8,7,6,5,2;q)$, we have $K=47$, $%
n_{0}=5$ and $n=37$. Therefore the interval $[\frac{1}{n+1},\frac{k_{m}}{%
K+k_{m}})=[\frac{1}{38},\frac{2}{49})$ is non-empty.

However, for $h=(10,9,8,7,6,5,2;\frac{2}{49})$ and any $n \in \mathbb{N}$,
we have $\sum_{i>7n-1} h(i)= (\frac{2}{49})^{n-1}(2+ \frac{\frac{2}{49}
\cdot 47}{1- \frac{2}{49}})=4(\frac{2}{49})^{n-1}< h(7n-1)$. It means that $%
h \notin \mathcal{I}$. Since $\frac{2}{49} > \frac{1}{38}$, we have $h
\notin \mathcal{C}$ and so $h \in \mathcal{MC}$.

It is not difficult to check, using III again, that $h_{q}\notin \mathcal{I}$
if and only if $q<\frac{3}{50}$.
\end{example}

Observe, that $E(k_{1},\dots ,k_{m};q)\subset \sum_{i=1}^{K}C_{q}$, where $%
C_{q}=E((1;q))$ and $\sum_{i=1}^{K}C_{q}$ denotes the algebraic sum. In \cite%
{C} it is proved that, if $q<\frac{1}{K+1}$ then $\sum_{i=1}^{K}C_{q}$ is
homeomorphic to the Cantor set. The following theorem improves this result.

\begin{theorem}
\label{th3}Let $x=\left( k_{1},...,k_{m};q\right) $ be a multigeometric
sequence and 
\begin{equation*}
\Sigma :=\left\{ \underset{i=1}{\overset{m}{\sum }}\varepsilon
_{i}k_{i}:\left( \varepsilon _{i}\right) _{i=1}^{m}\in \left\{ 0,1\right\}
^{m}\right\} \text{.}
\end{equation*}%
If $q<1/card\left( \Sigma \right) $ then $E\left( x\right) $ is a Cantor set.

\begin{proof}
Clearly, $E\left( x\right) =\Sigma +qE\left( x\right) $. Suppose that $%
q<1/card\left( \Sigma \right) $ and the set $E\left( x\right) $ has a
nonempty interior. Therefore $E\left( x\right) $ has positive Lebesgue
measure $\lambda \left( E\left( x\right) \right) $ and%
\begin{equation*}
\lambda \left( E\left( x\right) \right) \leq card\left( \Sigma \right) \cdot
q\cdot \lambda \left( E\left( x\right) \right) <\lambda \left( E\left(
x\right) \right)
\end{equation*}%
which gives a contradiction.
\end{proof}
\end{theorem}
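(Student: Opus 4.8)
The plan is to exploit the self-similar identity $E(x)=\Sigma+qE(x)$, which holds because any subsum of $x$ splits off the first block $(k_1,\dots,k_m)$ contributing some element of $\Sigma$, with the remaining tail being $q$ times a subsum of the shifted-by-one-block sequence, which is again $x$. This identity packages all the combinatorial structure, so the argument reduces to a dimension/measure estimate. First I would recall (Kakeya property I) that $E(x)$ is always compact, so $\lambda(E(x))$ makes sense; and if $E(x)$ is not a Cantor set then by Theorem~\ref{th1} it is either a finite union of intervals or a Cantorval, and in either case it has nonempty interior, hence positive Lebesgue measure.

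Next I would run the measure contraction estimate. From $E(x)=\Sigma+qE(x)=\bigcup_{s\in\Sigma}\bigl(s+qE(x)\bigr)$, subadditivity of $\lambda$ under finite unions and the scaling $\lambda(qA)=q\,\lambda(A)$ give
\begin{equation*}
\lambda(E(x))\le\sum_{s\in\Sigma}\lambda\bigl(s+qE(x)\bigr)=\operatorname{card}(\Sigma)\cdot q\cdot\lambda(E(x)).
\end{equation*}
Under the hypothesis $q<1/\operatorname{card}(\Sigma)$ the factor $\operatorname{card}(\Sigma)\cdot q$ is strictly less than $1$, so if $\lambda(E(x))$ were positive we would get $\lambda(E(x))<\lambda(E(x))$, a contradiction. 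Hence $\lambda(E(x))=0$, so $E(x)$ has empty interior, so it is not in $\mathcal I$ nor $\mathcal{MC}$, and therefore by Theorem~\ref{th1} it is homeomorphic to the Cantor set.

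The only genuinely delicate point — and the one I would state carefully rather than wave at — is the passage from ``$E(x)$ is not a Cantor set'' to ``$E(x)$ has positive Lebesgue measure.'' This is exactly where Theorem~\ref{th1} does the work: a finite union of (nondegenerate) closed intervals plainly has positive measure, and a Cantorval is by definition the closure of its (nonempty) interior, hence also has positive measure; the degenerate case of a single point or finite set is excluded since $x\in l_1\setminus c_{00}$ forces $E(x)$ to be perfect (property~I). Everything else is the routine scaling computation above, so the proof is short; the art is in invoking the trichotomy at the right moment.
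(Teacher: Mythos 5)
Your proof is correct and follows essentially the same route as the paper's: the self-similarity $E(x)=\Sigma+qE(x)$, finite subadditivity plus the scaling $\lambda(qA)=q\lambda(A)$, and the resulting strict inequality $\lambda(E(x))<\lambda(E(x))$ when $\lambda(E(x))>0$. The extra care you take in justifying that ``not a Cantor set'' forces nonempty interior (via Theorem \ref{th1}) is left implicit in the paper but is exactly the intended reasoning.
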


Using the latter theorem to the Weinstein-Shapiro sequence $a_{q}=\left(
8,7,6,5,4;q\right) $ (compare Example \ref{ex-WS}) we obtain $\Sigma $ of
cardinality $25$. It means that $E\left( a_{q}\right) \in \mathcal{C}$ for $%
q\in \left( 0,\frac{1}{25}\right) $.\ We do not know what is the type of $%
E(a_{q})$ for $q\in \left[ \frac{1}{25},\frac{1}{23}\right) $. Analogously, $%
E\left( b_{q}\right) \in \mathcal{C}$ for $q\in \left( 0,\frac{1}{22}\right) 
$ (compare Example \ref{ex-F}), $E\left( d_{q}\right) \in \mathcal{C}$ for $%
q\in \left( 0,\frac{1}{8}\right) $ (compare Example \ref{ex-JV}) and $%
E\left( h_{q}\right) \in \mathcal{C}$ for $q\in \left( 0,\frac{1}{42}\right) 
$ (compare Example \ref{ex-h}).

\section{The generalization of Guthrie-Nymann example.}

We have just mentioned that Theorem \ref{th2} does not work for sequences $%
(3,2;q)$ and $(3,2,2;q)$. However, Guthrie and Nymann have proved that $%
c=(3,2;\frac{1}{4})\in \mathcal{MC}$. Following their method we will find $q<%
\frac{1}{n+1}$ such that 
\begin{equation*}
(3,\underbrace{2,\dots ,2}_{k-times};q)\in \mathcal{MC}.
\end{equation*}

\begin{theorem}
\label{th7}For any sequence of the form 
\begin{equation*}
x_{k}=(3,\underbrace{2,\dots ,2}_{k-times};\frac{1}{2k+2}),
\end{equation*}%
the set $E(x_{k})$ is a Cantorval.
\end{theorem}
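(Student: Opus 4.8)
The plan is to follow the Guthrie--Nymann strategy for $c=(3,2;\frac14)$ adapted to the parameter $q=\frac{1}{2k+2}$, which sits just below the threshold $\frac{1}{n+1}=\frac{1}{2k+1}$ where Theorem~\ref{th2} fails. Write $x_k=(3,\underbrace{2,\dots,2}_{k};q)$ with $q=\frac{1}{2k+2}$, so $K=2k+3$. First I would record the self-similar identity $E(x_k)=\Sigma+qE(x_k)$, where $\Sigma=\{\sum_{i=1}^{m}\varepsilon_ik_i\}$ is the set of one-block sums; here $\Sigma=\{0,2,3,4,\dots,2k,2k+1,2k+3\}$, i.e.\ every integer in $[0,2k+3]$ except $1$ and $2k+2$. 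The total sum is $S:=\sum x_k(i)=K\cdot\frac{1}{1-q}=\frac{(2k+3)(2k+2)}{2k+1}$, so $E(x_k)\subseteq[0,S]$.

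The two things to prove are (a) $E(x_k)$ has nonempty interior (so $E(x_k)\notin\mathcal C$), and (b) $E(x_k)$ is not a finite union of closed intervals (so $E(x_k)\notin\mathcal I$); then Theorem~\ref{th1} forces $E(x_k)\in\mathcal{MC}$. For (b), the cleanest route is again the Kakeya property III: since $q=\frac{1}{2k+2}<\frac{k_m}{K+k_m}=\frac{2}{2k+5}$ holds for all $k\ge1$ (cross-multiplying, $2k+5<2(2k+2)$ iff $1<2k$), the tail estimate $\sum_{i>nm}x_k(i)=q^{n-1}\cdot\frac{Kq}{1-q}<q^{n-1}\cdot 2=x_k(nm)$ shows $x_k(nm)>\sum_{i>nm}x_k(i)$ for every $n$, and the second sentence of III gives $E(x_k)\notin\mathcal I$. (This part is identical in spirit to the corresponding step in the proof of Theorem~\ref{th2} and is genuinely routine.)

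The heart of the matter is (a), and here I would argue exactly as Guthrie--Nymann did for $k=1$. The obstruction to the telescoping argument of Theorem~\ref{th2} is the ``gap'' in $\Sigma$: consecutive attainable block-sums jump by $2$ near the top ($2k+1$ to $2k+3$) and near the bottom ($0$ to $2$), so a single digit $p_j\in\{0,2,3,\dots,2k+1,2k+3\}$ cannot by itself fill an interval. Guthrie--Nymann's key observation is that although one digit leaves gaps, \emph{pairs of consecutive digits} do not: the two-block sums $p_j+q\,p_{j+1}$ (after suitable rescaling) together with a ``carry'' from deeper digits cover a full interval. Concretely, I would show that the interval $[\,3/(1-q)+2q/(1-q),\ (2k+1)/(1-q)\,]$ (or an explicitly computed subinterval — the exact endpoints come from choosing the digit strings that are ``eventually all $2k+1$'s'' versus ``eventually all $3$'s and $2$'s'') is contained in $E(x_k)$, by exhibiting for each target $t$ in that interval an explicit admissible digit sequence $(p_j)$ with $t=\sum_j p_jq^j$. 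The mechanism: given $t$, greedily choose $p_0$; the remainder lies in an interval of length $\ge q\cdot(\text{one block of length }2)$ because the ``missing'' values $1$ and $2k+2$ in $\Sigma$ are exactly compensated by the fact that $1+q\Sigma$ and $(2k+2)+q\Sigma$ overlap the neighbouring translates $0+q\Sigma$ and $(2k+1)+q\Sigma$ once $q=\frac{1}{2k+2}$ — this is precisely where the special value of $q$ enters, making the overlaps meet rather than leave a gap. Carrying out this overlap bookkeeping carefully — verifying that the union $\bigcup_{s\in\Sigma}(s+qJ)$ is an interval $J'$ containing a scaled copy of $J$, hence by iteration a fixed interval survives in $E(x_k)$ — is the main obstacle, and it is essentially a $k$-uniform version of the interval-covering computation in \cite{GN}. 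Once a single nondegenerate interval is shown to lie in $E(x_k)$, part (a) is done, and combining (a) and (b) with Theorem~\ref{th1} completes the proof.
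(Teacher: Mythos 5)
Your reduction via Theorem \ref{th1} is the right frame, and your part (b) is correct and is exactly what the paper does: $q=\frac{1}{2k+2}<\frac{2}{2k+5}$ for $k\geqslant 1$, so the tail estimate and the second sentence of Kakeya's property III give $x_k\notin\mathcal{I}$. The gap is in part (a), which you leave as ``overlap bookkeeping'' to be carried out; unfortunately the specific mechanism you propose --- find an interval $J$ such that $\bigcup_{s\in\Sigma}(s+qJ)$ is an interval containing $J$, then iterate --- provably cannot be carried out at $q=\frac{1}{2k+2}$. For that union to be an interval, $q|J|$ must be at least the largest gap in $\Sigma=\{0,2,3,\dots,2k+1,2k+3\}$, which is $2$, forcing $|J|\geqslant 2(2k+2)$; but $J\subseteq E(x_k)\subseteq[0,\frac{(2k+3)(2k+2)}{2k+1}]$ and $\frac{(2k+3)(2k+2)}{2k+1}<2(2k+2)$ for $k\geqslant 1$. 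Even if you discard $0$ and $2k+3$ and use only the consecutive run $\{2,\dots,2k+1\}$, bridging gaps of size $1$ needs $|J|\geqslant 1/q=2k+2$, while the containment constraints $\alpha\geqslant\frac{2}{1-q}$, $\beta\leqslant\frac{2k+1}{1-q}$ force $|J|\leqslant\frac{(2k-1)(2k+2)}{2k+1}<2k+2$. Iterating to deeper levels does not help, because the gaps of $\Sigma+q\Sigma+\dots+q^{n-1}\Sigma$ scale exactly like $q^n$: the value $q=\frac{1}{2k+2}$ is critical (consistently with Theorem \ref{th3}, which gives a Cantor set for any smaller $q$), so every covering is tight rather than overlapping, and no strict one-step (or $n$-step) contraction argument survives. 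Relatedly, your phrase ``$1+q\Sigma$ and $(2k+2)+q\Sigma$ overlap the neighbouring translates'' does not parse: $1$ and $2k+2$ are precisely the values \emph{not} in $\Sigma$, so these translates do not occur in the decomposition $E=\bigcup_{s\in\Sigma}(s+qE)$.

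What is needed instead --- and what the paper does --- is to realize every digit of $\{0,1,\dots,2k+1\}$, including the missing digit $1$, by an explicit borrowing identity, and to work with finite expansions rather than interval coverings. Concretely, one shows by induction on $n$ that every point $3+\sum_{i=1}^{n}\frac{\varepsilon_i}{(2k+2)^i}$ with $\varepsilon_i\in\{0,\dots,2k+1\}$ lies in $E(x_k)$: each digit in $\{0,2,3,\dots,2k+1\}$ is a one-block sum, and the digit $1$ at position $n$ is produced by decreasing the previous digit by one and inserting the full block $K=2k+3$, via
\begin{equation*}
\frac{1}{(2k+2)^{n}}=-\frac{1}{(2k+2)^{n-1}}+\frac{3+2k}{(2k+2)^{n}},
\end{equation*}
with a separate computation in the boundary case where the truncated point equals $3$. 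Density of these finite expansions in $[3,4]$ together with compactness of $E(x_k)$ then gives $[3,4]\subseteq E(x_k)$. Your sketch correctly identifies the borrowing phenomenon and the role of the critical value of $q$, but the argument you outline for producing the interval is the part of the proof that actually has to be done, and in the form you state it, it fails.
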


\begin{proof}
We know that $x_k \notin \mathcal{I}$, because $\frac{1}{2k+2}< \frac{2}{2k+5%
}$ (compare with Example \ref{ex-JV}). It remains to prove that $E(x_k)$
contains an interval.

For a sake of clarity, we will prove a thesis for $k=2,$ i.e. we will show
that $E(x_{2})\supset \lbrack 3,4]$, which means that any point 
\begin{equation*}
t=3+\sum_{i=1}^{\infty }\frac{\varepsilon _{i}}{6^{i}}
\end{equation*}%
with $\varepsilon _{i}=\{0,\dots ,5\}$ belongs to $E(x_{2})$.

Since $E(x_2)$ is closed and the set $\{3+ \sum_{i=1}^{n} \frac{\varepsilon_i%
}{6^i} : \varepsilon_i \in \{0, \dots,5 \}, i \leqslant n, n=0,1, \dots \}$
is dense in $[3,4]$, it is enough to show that 
\begin{equation*}
3+ \sum_{i=1}^{n} \frac{\varepsilon_i}{6^i} \in E(x_2)
\end{equation*}
for any $n=0,1, \dots$, $\varepsilon_i =0, \dots ,5$.

For $n=0$, we have $3 \in E(x_2)$.

Suppose that any number of the form

\begin{equation}  \label{eq1}
t^{\prime }= 3+ \sum_{i=1}^{n-1} \frac{\varepsilon_i}{6^i}
\end{equation}
belongs to $E(x_2)$. It means that there exist $a_i, b_i, c_i \in \{0,1 \}$
such that 
\begin{equation*}
t^{\prime }= 3+ \sum_{i=1}^{n-1} \frac{3a_i + 2b_i +2 c_i}{6^i}.
\end{equation*}
Let 
\begin{equation*}
t = 3+ \sum_{i=1}^{n} \frac{\varepsilon_i}{6^i}.
\end{equation*}
If $\varepsilon_n =0,2,3,4$ or $5$, then 
\begin{equation*}
t=t^{\prime }+\frac{3a_n + 2b_n +2 c_n}{6^n}
\end{equation*}
for some $t^{\prime }$ and suitable $a_n, b_n$ and $c_n$.

Suppose that $\varepsilon _{n}=1$. Hence 
\begin{equation*}
t=t^{\prime }+\frac{1}{6^{n}}=t^{\prime }-\frac{1}{6^{n-1}}+\frac{3+2+2}{%
6^{n}}=t^{\prime \prime }+\frac{3+2+2}{6^{n}}.
\end{equation*}%
If $t^{\prime }>3$ then $t^{\prime \prime }$ satisfies (\ref{eq1}) and the
proof is complete. If $t^{\prime }=3$ then 
\begin{equation*}
t=3+\frac{1}{6^{n}}=2+(1-\frac{1}{6^{n-1}})+\frac{7}{6^{n}}=2+(\frac{5}{6}+%
\frac{5}{6^{2}}+\dots +\frac{5}{6^{n-1}})+\frac{3+2+2}{6^{n}}\in E(x_{2}).
\end{equation*}%
To show that for a fixed $k\geqslant 2$, any $n=0,1,...$ and $\varepsilon
_{i}=0,...,2k+1$%
\begin{equation*}
3+\sum_{i=1}^{n}\frac{\varepsilon _{i}}{\left( 2k+2\right) ^{i}}\in E(x_{k})
\end{equation*}%
and hence $\left[ 3,4\right] \in E(x_{k})$, one can repeat the previous
considerations, using the equality%
\begin{equation*}
3+\frac{1}{\left( 2k+2\right) ^{n}}=2+\left( 1-\frac{1}{\left( 2k+2\right)
^{n-1}}\right) +\frac{3+2k}{\left( 2k+2\right) ^{n}}\text{.}
\end{equation*}
\end{proof}

Note that, even for special sequences considered in this paper, it is very
hard to distinguish sequences belonging to $\mathcal{C}$ from sequences
belonging to $\mathcal{MC}$. In particular, for any sequence of the form 
\begin{equation*}
x_{q}=(3,2,\dots ,2;q),
\end{equation*}%
where $2$'s repeats itself $k$-times, $x_{q}\in \mathcal{I}$ if and only if $%
q\geqslant \frac{2}{2k+5}$, and, by Theorem \ref{th3}, $x_{q}\in \mathcal{C}$
for $q<\frac{1}{card\left( \Sigma \right) }=\frac{1}{2k+2}$.

\setlength{\unitlength}{1mm} 
\begin{picture}(96,38)
\put(0,12){\vector(1,0){123}}
\put(2,14){0}
\put(13,6){$\mathcal{C}$}
\put(25,16){$\frac{1}{2k+2}$}
\put(25,6){$\mathcal{MC}$}
\put(28,11.75){\line(1,0){25}}
\put(41,7.5){?}
\put(51,16){$\frac{1}{2k}$}
\put(61,6){$\mathcal{MC}$}
\put(75,16){$\frac{2}{2k+5}$}
\put(89,6){$\mathcal{I}$}
\put(102,16){$\frac{1}{2}$}
\multiput(3,12)(25,0){5}{\circle*{1}}
\end{picture}

We have no idea what are the types of sets $E(x_{q})$ for $q\in (\frac{1}{%
2k+2},\frac{1}{2k})$.

Finally, go back to the Guthrie and Nymann sequence $c=\left( 3,2;\frac{1}{4}%
\right) $. Z. Nitecki, in \cite{N}, proved that for $q<\frac{1}{4}$ the
sequence%
\begin{equation*}
c_{q}=\left( 3,2;q\right)
\end{equation*}%
belongs to $\mathcal{C}$. The same conclusion follows easily from Theorem %
\ref{th3}. It is not difficult to check that $x_{q}\in \mathcal{I}$ if and
only if $q\geqslant \frac{2}{7}$.

\setlength{\unitlength}{1mm} 
\begin{picture}(96,38)
\put(0,12){\vector(1,0){123}}
\put(2,14){0}
\put(16,6){$\mathcal{C}$}
\put(32,16){$\frac{1}{4}$}
\put(33,11.7){\line(1,0){30}}
\put(47,7.5){?}
\put(29,6){$\mathcal{MC}$}
\put(62,16){$\frac{2}{7}$}
\put(75,6){$\mathcal{I}$}
\multiput(3,12)(30,0){4}{\circle*{1}}
\end{picture}

We do not know what is the type of $E(x_q)$ for $q \in (\frac{1}{4}, \frac{2%
}{7})$.

At last, let us consider one more example from \cite{N} (due to Kenyon).

\begin{example}
The achievement set $E(f)$ of the sequence $f=(6,1;\frac{1}{4})$ (in our
notation) is $\mathcal{M}$-Cantorval. To prove it, Nitecki observes that $6$
is equal to $2$ mod $4$ and each element of $\mathbb{Z}_{4}$ can be obtained
by summing up the numbers 2 and 1 (compare the proof of Theorem \ref{th7}).
Then he makes use of the Baire category theorem. By our mind, this fact can
be explained in a much simpler way. Indeed, 
\begin{equation*}
f=\frac{1}{2}(12,2;\frac{1}{4})=\frac{1}{2}(12,2,3,2\cdot \frac{1}{4},3\cdot 
\frac{1}{4},2\cdot \frac{1}{16},\dots ).
\end{equation*}%
Hence 
\begin{equation*}
E(f)=\frac{1}{2}E(12,3,2,3\cdot \frac{1}{4},2\cdot \frac{1}{4},\dots )=\frac{%
1}{2}E(c)\cup \frac{1}{2}(E(c)+12)
\end{equation*}%
and $E(f)$ is of the same form as $E(c)$. In general, it is easy to observe
(in the same way as above) that the sequences $(k_{1},k_{2},\dots ,k_{m};q)$
and $(q^{n_{1}}k_{1},q^{n_{2}}k_{2},\dots ,$ $q^{n_{m}}k_{m};q)$ for
integers $n_{1},n_{2},\dots ,n_{m}$ are in the same set among of $\mathcal{C}%
,\mathcal{I}$ or $\mathcal{MC}$. Observe, for instance, that $(2,1;\frac{1}{4%
})\in \mathcal{I}$ and $(3,8;\frac{1}{4})\in \mathcal{MC}$. However, each
element of $\mathbb{Z}_{4}$ can be obtained by summing up $2$ and $1$, but $%
2 $ can not be obtained by summing up $3$ and $8$.
\end{example}

\section*{Acknowledgement}

The authors are greatly indebted to the referees for several helpful
comments improving the paper. In particular they wish to express their
thanks for the idea of Theorem \ref{th3}.

\end{document}